%
%
\documentclass[preprint]{elsarticle}
%
%


\usepackage{amssymb}           
\usepackage{bm}                

\usepackage{amsmath}           
\usepackage{amsthm}            
\usepackage{amscd}             
\usepackage{extarrows}         

\usepackage{ascmac}            
\usepackage{indentfirst}       

\usepackage{graphicx}          










\newtheorem{theorem}{Theorem}[section]
\newtheorem{prop}[theorem]{Proposition}
\newtheorem{cor}[theorem]{Corollary}

\theoremstyle{definition}

\newtheorem{example}[theorem]{Example}
\newtheorem{remark}[theorem]{Remark}
%

%
%



\newcommand{\R}{\ensuremath{\mathbb{R}}}

\newcommand{\Z}{\ensuremath{\mathbb{Z}}}

\newcommand{\Col}{\mathop{\mathrm{Col}}\nolimits}
\newcommand{\s}{\mathop{\mathcal{S}}\nolimits}

\begin{document}



\date{\today}


\title{On rack colorings for surface-knot diagrams without branch points}


\author{Kanako Oshiro}
\address{Department of Information and Communication Sciences, 
Sophia University, 7-1 Kioicho, Chiyoda-ku, Tokyo 102-8554, Japan}
\ead{oshirok@sophia.ac.jp}

\author{Kokoro Tanaka}
\address{Department of Mathematics, Tokyo Gakugei University, 
Nukuikita 4-1-1, Koganei, Tokyo 184-8501, Japan}
\ead{kotanaka@u-gakugei.ac.jp}





\begin{abstract}
Racks do not give us invariants of surface-knots in general. 
For example, if a surface-knot diagram has branch points 
(and a rack which we use satisfies some mild condition), 
then it admits no rack colorings. 
In this paper, we investigate rack colorings for surface-knot diagrams 
without branch points and prove that 
rack colorings are invariants of $S^2$-knots. 
We also prove that rack colorings for $S^2$-knots 
can be interpreted in terms of quandles, 
and discuss a relationship with regular-equivalences of surface-knot diagrams.  
\end{abstract}

\begin{keyword}
surface-knot \sep rack \sep branch point \sep quandle
\MSC Primary 57Q45 \sep Secondary 57M25
\end{keyword}

\maketitle


\section{Introduction}\label{sec:intro}

A \textit{quandle} \cite{Joy-82,Mat-82} is an algebraic system 
with three axioms which correspond to the Reidemeister moves, and 
is useful for studying classical knots in the $3$-space. 
For example, for a given quandle, we have an invariant of oriented knots, 
called a \textit{quandle coloring}, 
which has been extensively studied by many researchers 
(cf. \cite{CESY,HHO-12,Inoue-01}). 
There is an algebraic system, called a \textit{rack} \cite{FR-92}, similar to a quandle. 
It has two axioms which correspond to the framed Reidemeister moves, and 
is useful for studying framed classical knots in the $3$-space. 
We note that a quandle is a rack by definition. 
For example, for a given rack, 
we have an invariant of oriented framed knots, 
called a \textit{rack coloring}. 
Rack colorings themselves are not invariants of (unframed) oriented knots. 
However, by using rack colorings, Nelson \cite{Nel-pre} constructed 
an invariant of oriented knots. 
Later, the second author and Taniguchi \cite{TanakaT} gave 
interpretation of his invariant in terms of quandles. 

Quandles are also useful for studying surface-knots in the $4$-space. 
For example, for a given quandle, 
we also have an invariant of surface-knots, called 
a \textit{quandle coloring}. 
A quandle coloring for surface-knots has also been extensively studied 
by many researchers (cf. \cite{Iwa-06,SS-05,Tan-05}). 
On the contrary, as far as the authors know, 
there are no study of surface-knots by using rack theory at present. 
(We note that framed surface-knots, more generally 
framed submanifolds of the $n$-space of codimension $2$ with $n \geq 3$, 
were studied in \cite{FRS-07} by using rack theory.) 
One of reasons is that surface-knot diagrams may have branch points. 
For example, for a given rack, 
we can also define a \textit{rack coloring} for surface-knot diagrams 
in a way similar to quandle colorings. 
However, if a surface-knot diagram has branch points 
(and a rack which we use satisfies some mild condition), 
then it admits no rack coloring. 
We also observe in Example~\ref{ex:Satoh} that 
even if surface-knot diagrams which we consider have no branch points, 
rack colorings are not invariants of surface-knots in general. 

In this paper, we investigate rack colorings for surface-knot diagrams 
without branch points and prove that 
rack colorings are invariants of $S^2$-knots (Theorem~\ref{thm:main1}). 
We also prove that 
rack colorings for $S^2$-knot diagrams without branch points can be interpreted 
in terms of quandles (Theorem~\ref{thm:main2}). 
We note that Theorem~\ref{thm:main2} implies Theorem~\ref{thm:main1}. 
In the final section, we also discuss a relationship with 
regular-equivalences of surface-knot diagrams, 
where two diagrams representing the same surface-knot are said to be 
regular-equivalent if they are related by a finite sequence of 
\lq\lq branch-free\rq\rq\ Roseman moves. 

This paper is organized as follows.
Section~\ref{sec:def} is devoted to reviewing racks, quandles, 
surface-knots and their diagrams. 
Our main results, Theorem~\ref{thm:main1} and \ref{thm:main2}, are stated 
in Section~\ref{sec:main}. 
We study rack colorings of 
surface-knot diagrams with immersed curves in Section~\ref{sec:immersed}. 
Section~\ref{sec:associated} provides a relationship 
between a rack coloring with immersed curves and a quandle coloring 
when surface-knot diagrams which we consider have no branch points.  
These two sections are devoted to giving the proof of Theorem~\ref{thm:main2}, 
which is proven in the end of Section~\ref{sec:associated}. 
As mentioned above, Theorem~\ref{thm:main2} implies Theorem~\ref{thm:main1}. 
In Section~\ref{sec:reg-eq}, we discuss a relationship with regular-equivalences 
of surface-knot diagrams.

\section{Definition}\label{sec:def}

\subsection{Racks and quandles}
For a non-empty set $X$ and a binary operation $*$ on $X$, 
we consider the following three conditions. 
\begin{enumerate}
\item[(Q1)]
For any $a \in X$, $a*a=a$. 
\item[(Q2)]
For any $a \in X$, the map $*a : X \to X$, defined by $\bullet \mapsto \bullet *a$, 
is bijective. 
\item[(Q3)]
For any $a,b,c \in X$, $(a*b)*c=(a*c)*(b*c)$. 
\end{enumerate}
These three correspond to the Reidemeister 
moves of type I, II and III respectively. 

A pair $(X,*)$ is called a \textit{quandle} 
if it satisfies conditions (Q1), (Q2) and (Q3). 
Quandles are useful for studying oriented knots and 
also for oriented surface-knots. 
A pair $(X,*)$ is called a \textit{rack} 
if it satisfies conditions (Q2) and (Q3). 
Racks are useful for studying oriented framed knots. 
We remark that a quandle is a rack by definition and 
that a rack/quandle $(X,*)$ is often abbreviated to $X$. 
Racks and quandles have been studied in, for example, \cite{FR-92,Joy-82,Mat-82}.



\subsection{Surface-knots and their diagrams}
A \textit{surface-knot} (or a \textit{$\Sigma^2$-knot}) is 
a submanifold of the $4$-space $\R^4$, 
homeomorphic to a closed connected oriented surface $\Sigma^2$. 
We always assume that all surface-knots are oriented in this paper. 
Two surface-knots are said to be \textit{equivalent} if they can be deformed into  
each other through an isotopy of $\R^4$.

A \textit{diagram} of a surface-knot is its image 
via a generic projection from $\R^4$ to $\R^3$, 
equipped with the height information as follows: 
At a neighborhood of each double point, there are 
intersecting two disks and 
one is higher than the other with respect to the $4$th coordinate 
dropped by the projection. 
Then the height information is indicated by removing 
the regular neighborhood of the double point in the lower disk 
along the double point curves. 
Then a diagram is regarded as a disjoint union of connected compact 
oriented surfaces, each of which is called a \textit{sheet}. 
A diagram is basically composed of four kinds of 
local pictures, 
each of which is the image of a neighborhood of 
a typical point --- 
a regular point, a \textit{double point}, 
an isolated \textit{triple point} or an isolated \textit{branch point}. 
The latter three are depicted in Figure~\ref{fig:diagram}.

\begin{figure}[thbp]\centering
\includegraphics[width=0.7\textwidth]{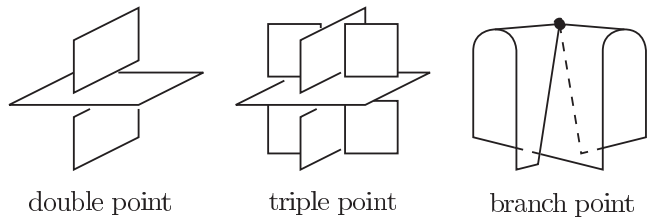}
\caption{Local picture of the projection image of a surface-knot}\label{fig:diagram}
\end{figure}

Two surface-knot diagrams are said to be \textit{equivalent} 
if they are related by (ambient isotopies of $\R^3$ and) 
a finite sequence of seven Roseman moves, 
shown in Figure~\ref{fig:roseman}, where we omit height information for simplicity. 
According to Roseman \cite{Ros-95}, two surface-knots are equivalent if and only if 
they have equivalent diagrams. We refer to \cite{CS-book} for more details.

\begin{figure}[thbp]\centering
\includegraphics[width=1.0\textwidth]{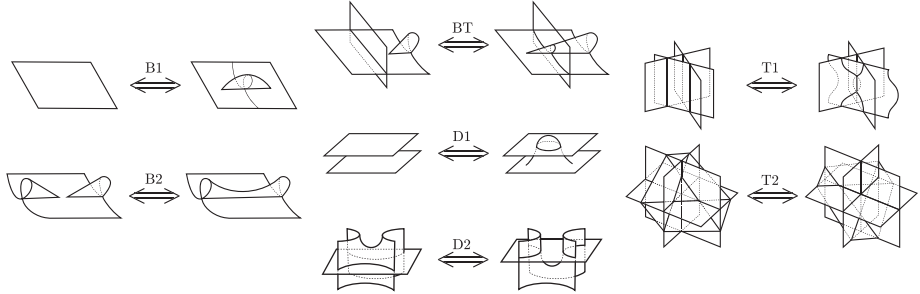}
\caption{Roseman moves}\label{fig:roseman}
\end{figure}

\section{Rack colorings and main theorem}\label{sec:main}

\subsection{Rack colorings}
We represent the orientation of a surface-knot diagram 
by assigning normal directions $\vec{n}$, 
depicted by an arrow 
which looks like the symbol \lq\lq $\Uparrow$\rq\rq\ 
as in Figure~\ref{fig:coloring-1}, to each sheet of the diagram 
such that the triple $(\vec{v}_1, \vec{v}_2, \vec{n})$ matches 
the orientation of $\R^3$, where the pair $(\vec{v}_1, \vec{v}_2)$ 
denote the orientation of the sheet. 

For a surface-knot diagram $D$, let $\s(D)$ denote the set of all sheet of $D$. 
For a rack $R$, a map $c: \s(D) \to R$ is 
a \textit{rack coloring} 
if it satisfies the following relation along each double point curve. 
Let $x_j$ be the over-sheet along a double point curve, and 
$x_i, x_k$ be under-sheets along the double point curve 
such that the normal direction of $x_j$ points from $x_i$ to $x_k$. 
Then it is required that $c(x_k)=c(x_i)*c(x_j)$ as in Figure~\ref{fig:coloring-1}.  
Let $\Col_R(D)$ be the set of rack colorings of $D$ by $R$. 
We note that when a rack $R$ is finite, $\Col_R(D)$ is also finite.  
If a rack which we consider is a quandle, 
then a rack coloring is also called a \textit{quandle coloring}. 
%

\begin{figure}[thbp]\centering
\begin{minipage}[]{0.35\hsize}
\includegraphics[width=\hsize]{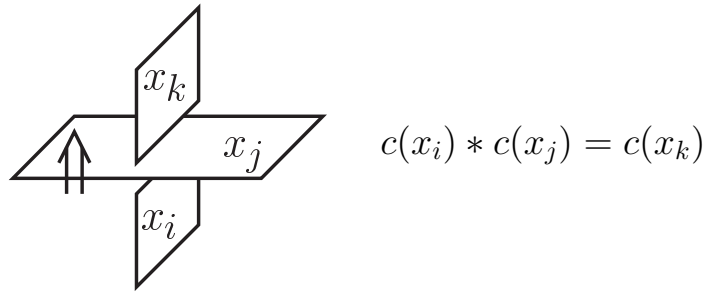}
\end{minipage}\qquad{\Large $c(x_i) * c(x_j) = c(x_k)$}
\caption{Coloring relation along a double point curve}
\label{fig:coloring-1}
\end{figure}

It is known that a quandle coloring by a quandle $Q$ 
is an invariant of surface-knots (cf. \cite{CKS-book}). 
Precisely speaking, for two diagrams $D_1$ and $D_2$ of a surface-knot, 
there is a bijection between $\Col_Q(D_1)$ and $\Col_Q(D_2)$. 
For rack colorings, this is not the case in general. 
One of the difficulties is the existence of branch points. 
%
For example, if a diagram $D$ has branch points and 
a rack $R$ has no element $a$ such that $a*a = a$, 
then $D$ admits no rack coloring by $R$, that is, $\Col_R(D)$ is the emptyset. 
This is because the equation $a*a =a$ for some $a \in R$ is required 
as the rack coloring condition along a double point curve 
one of whose endpoints is a branch point. 
(We note that a non-quandle connected rack, such as a cyclic rack 
\cite[Example 7]{FR-92}, has no element $a$ such that $a*a = a$. 
Since we do not use the precise definition of the connectedness for racks, 
we omit the details and give some comments instead. 
It is known that any rack is decomposed into the connected components 
each of which is also a rack. 
It is also known 
that any rack coloring for a surface-knot diagram by a rack 
is essentially 
the one by a connected component of the rack. 
Hence the connectedness for racks is 
suitable in considering rack colorings for surface-knot diagrams.)
%

\subsection{Diagrams without branch points}\label{subsec:branch}
Even if surface-knot diagrams which we consider have no branch points, 
we observe, in Example~\ref{ex:Satoh} below,  
that rack colorings are not invariants of surface-knots in general. 
We note that every (oriented) surface-knot has a diagram 
without branch points (cf. \cite{CS-92}). 

\begin{example}\label{ex:Satoh}
Let $D_1$ and $D_2$ be two surface-knot diagrams as in Figure~\ref{fig:satoh}. 
Both represent the same $T^2$-knot and do not have branch points. 
(We note that both represent a trivial $T^2$-knot, 
which bounds a solid torus in $\R^4$.) 
However, for a finite non-quandle connected rack $R$, 
the number of $\Col_R(D_1)$ is strictly more than that of $\Col_R(D_2)$, 
since $R$ has no element $a$ such that $a*a=a$ 
which is required as a rack coloring condition around 
the double point curve of $D_2$. 
That is, the number of $\Col_R(D_2)$ is zero, 
while the number of $\Col_R(D_1)$ is equal to that of $R$. 
This is because Roseman moves of type B1 and B2 may change the 
number of rack colorings by a finite rack in general. 
We note that these diagrams are appeared in Satoh's paper \cite{Sat-01}.
We return to this example in Section~\ref{sec:reg-eq} again.

\begin{figure}[thbp]\centering
{\large $D_1$:} \ 
\begin{minipage}{0.3\hsize}
\includegraphics[width=\hsize]{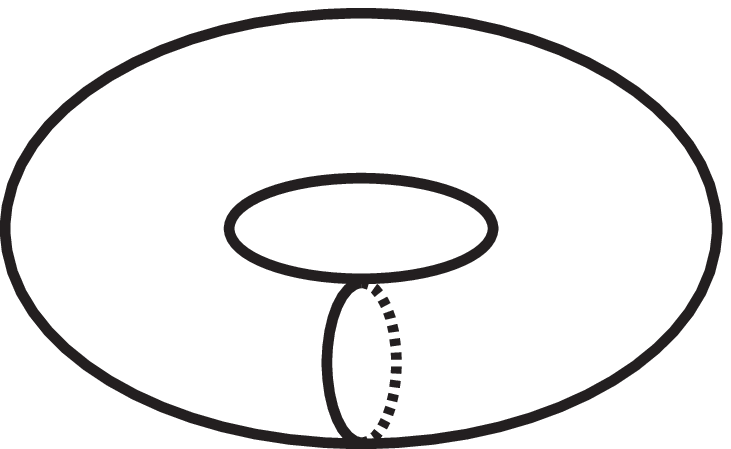}
\end{minipage} \qquad 
{\large $D_2$:} \ 
\begin{minipage}{0.33\hsize}
\includegraphics[width=\hsize]{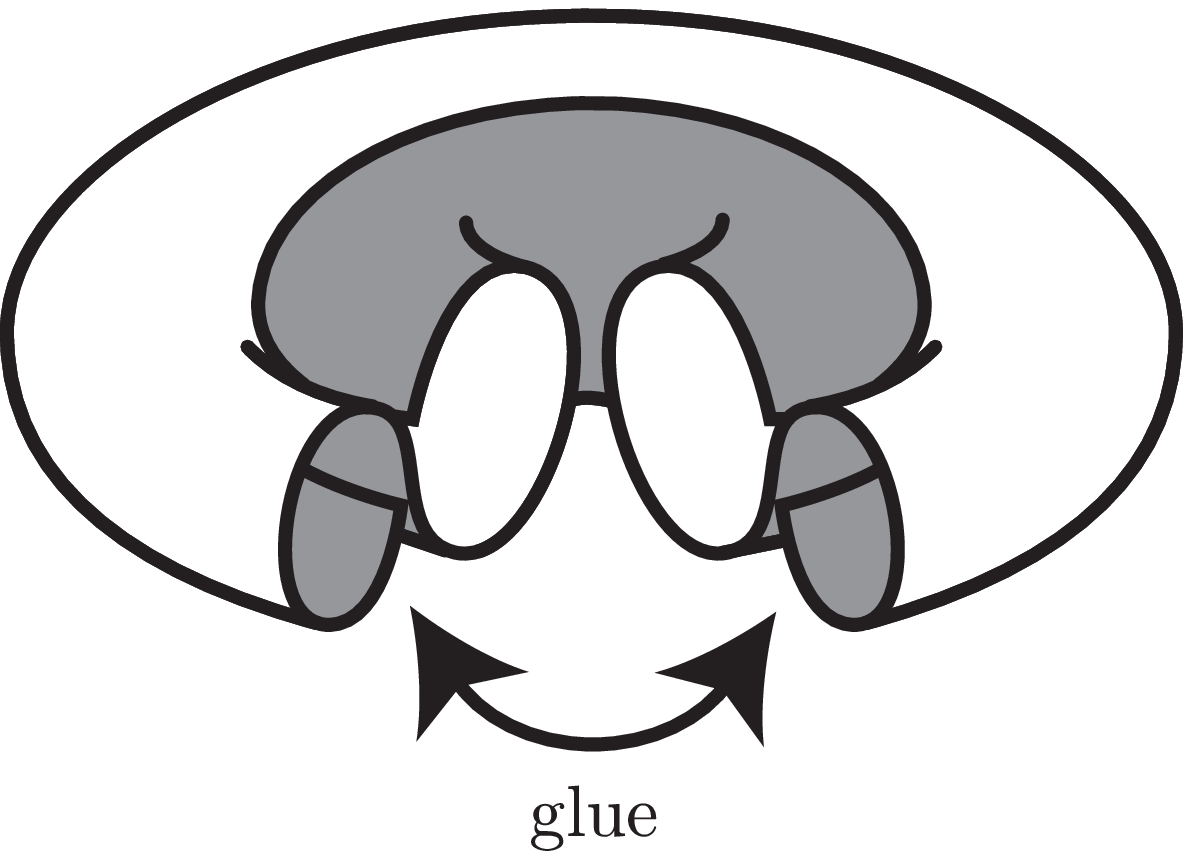}
\end{minipage}
\caption{Satoh's $T^2$-knot diagrams}\label{fig:satoh}
\end{figure}
\end{example}

\subsection{Main theorem}
The above situation is totally different for $S^2$-knots. 
In fact, we can prove the following:  

\begin{theorem}\label{thm:main1}
For any two diagrams $D_1$ and $D_2$ 
of an $S^2$-knot without branch points, 
there is a bijection between $\Col_R(D_1)$ and $\Col_R(D_2)$. 
\end{theorem}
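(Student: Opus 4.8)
The plan is to prove the stronger Theorem~\ref{thm:main2} (the quandle interpretation) and deduce Theorem~\ref{thm:main1} from it. By the reduction to connected racks recalled after Example~\ref{ex:Satoh}, I may assume $R$ is connected. If $R$ happens to be a quandle, a rack coloring is just a quandle coloring and the statement is classical; so I assume henceforth that $R$ is a non-quandle connected rack, which in particular has no element $a$ with $a*a=a$. First I would record the algebra of the kink map $\pi\colon R\to R$, $\pi(a)=a*a$. Using (Q3) one checks that $\pi$ is a rack homomorphism, that $\pi(a)*b=\pi(a*b)$ and $a*\pi(b)=a*b$, so that $\pi$ commutes with every right translation and is a bijection (standard for racks). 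Since $R$ is connected, its inner automorphism group acts transitively and fixes $\pi$ by this centrality, so $\pi$ has one common order $N$ on all of $R$ (read $\Z/N$ as $\Z$ if $N$ is infinite). The associated quandle is $Q=R/\langle a\sim\pi(a)\rangle$, and the quotient $R\to Q$ is a rack homomorphism, inducing a map $p\colon\Col_R(D)\to\Col_Q(D)$ together with a free action of $\langle\pi\rangle\cong\Z/N$ on $\Col_R(D)$ by $c\mapsto\pi\circ c$ (free because no color is $\pi$-fixed).

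The goal then becomes to show that $p$ induces a bijection $\Col_R(D)/\langle\pi\rangle\cong\Col_Q(D)$ whenever $D$ is a branch-free $S^2$-diagram. Granting this, transporting the free $\langle\pi\rangle$-action along the (invariant) quandle-coloring bijection $\Col_Q(D_1)\cong\Col_Q(D_2)$ yields a bijection $\Col_R(D_1)\cong\Col_R(D_2)$, which is Theorem~\ref{thm:main1}. The heart of the matter is the lifting step: given a quandle coloring $\bar c$ of $D$, I want a rack coloring projecting to it, unique up to a global $\pi$-shift. I would encode a lift by a ``framing'' function on sheets: fixing a set-theoretic section of $R\to Q$, each lift is recorded by a map $t\colon\s(D)\to\Z/N$ with $c(x)=\pi^{t(x)}(\hat c(x))$. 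Substituting into the coloring relation and using $a*\pi(b)=a*b$ and $\pi(a)*b=\pi(a*b)$, the over-sheet framing drops out and each double point curve contributes exactly one equation $t(x_k)-t(x_i)=\delta$, where $\delta\in\Z/N$ measures the discrepancy between $\hat c(x_i)*\hat c(x_j)$ and $\hat c(x_k)$.

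The topological input enters when one pulls $D$ back to the source sphere. The sheets are the regions cut out by the lower decker curves, and because $D$ has no branch points these curves are circles meeting only at triple-point preimages, so they define a CW decomposition of $S^2$. In this language the equations say precisely that the $\delta$'s form a $\Z/N$-valued $1$-cochain and that $t$ is a primitive of it. This cochain is a cocycle because the closedness condition around each triple-point preimage is exactly axiom (Q3); and since $H^1(S^2;\Z/N)=0$ every cocycle is a coboundary, so a global framing $t$ exists and is unique up to an element of $H^0(S^2;\Z/N)=\Z/N$, that is, up to a global $\pi$-shift. This gives simultaneously the surjectivity and the injectivity of the induced map $\Col_R(D)/\langle\pi\rangle\to\Col_Q(D)$, completing the argument.

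The main obstacle is this lifting step, and it is exactly where the hypotheses are used. Establishing the cocycle property forces a careful local analysis at triple points, together with the check that branch-freeness keeps the decker curves closed so that no stray endpoints spoil the CW structure; global solvability then rests on $H^1(S^2;\Z/N)=0$. For surfaces of higher genus $H^1$ is nonzero, so a locally consistent framing may carry nontrivial monodromy around a handle, and this monodromy can obstruct lifting a quandle coloring. That is precisely the mechanism behind the unequal counts for the two branch-free $T^2$-diagrams in Example~\ref{ex:Satoh}, and it explains why the result is special to $S^2$. Organizing the framing data as the $1$-cochain whose exactness controls the fibers of $p$ is the technical crux, and it is this bookkeeping that I expect Sections~\ref{sec:immersed} and~\ref{sec:associated} to carry out through rack colorings with immersed curves.
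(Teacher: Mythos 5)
Your strategy is sound and would indeed prove Theorem~\ref{thm:main1}, but it is genuinely different from the paper's route, and the ``Theorem~\ref{thm:main2}'' you prove along the way is not the paper's Theorem~\ref{thm:main2}. The paper's associated quandle $Q_R$ is $(R,*^{\iota})$ on the \emph{same} underlying set, with $a*^{\iota}b=\iota(a)*b$ for the kink map $\iota=\pi^{-1}$; it is not the quotient $R/\langle a\sim\pi(a)\rangle$ (for the cyclic rack of order $n$ the former is the trivial quandle on $n$ elements, the latter a single point), and accordingly the paper's statement is a plain bijection $\Col_R(D)\cong\Col_{Q_R}(D)$ rather than your $N$-to-$1$ covering $\Col_R(D)\to\Col_{R/\langle\pi\rangle}(D)$. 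The paper gets there by introducing rack colorings with a system $L$ of oriented immersed curves (crossing a curve applies $\iota$), proving via an Alexander-numbering argument that $\Col_R(D,L)$ depends only on the homology class of $L$ (Proposition~\ref{prop:immersed}), and observing that the push-off of the double point curves realizes $\Col_{Q_R}(D)$ as $\Col_R(D,L)$; on $S^2$ every $L$ is null-homologous, so $\Col_R(D,L)\cong\Col_R(D)$. Your obstruction-theoretic lift --- the $1$-cochain $\delta$, cocycle condition from (Q3) at the bottom-sheet preimages of triple points, primitive $t$ from $H^1(S^2;\Z/N)=0$, ambiguity $H^0=\Z/N$ --- is the Poincar\'e-dual of the same topological input (the paper's numbering function plays exactly the role of your $t$), so the two arguments are cousins; what yours buys is a structural description of $\Col_R(D)$ as a free $\Z/N$-cover of a quandle coloring set and a transparent explanation of the $T^2$ failure as monodromy in $H^1(T^2;\Z/N)$, while the paper's buys the on-the-nose bijection of Theorem~\ref{thm:main2} and, for higher genus, a family of invariants indexed by homology classes of $L$. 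To make your write-up complete you would still need to (i) justify the reduction to connected racks (the image of a coloring of a connected diagram lies in one orbit of the inner automorphism group), (ii) note that at middle-sheet preimages of triple points the lower decker arc is crossed by an upper decker arc without changing the adjacent under-sheets, so no extra condition arises there, and (iii) use the uniform order $N$ of $\pi$ to convert the (Q3) identity into the equation $\delta_{mb,1}+\delta_{tb,2}=\delta_{tb,1}+\delta_{mb,2}$ in $\Z/N$; all three are routine.
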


This theorem says that a rack coloring is an invariant of $S^2$-knots, 
even if a sequence of Roseman moves between 
$D_1$ and $D_2$ 
may involve branch points. 
For any rack $R$, we will define the associated quandle, denoted by $Q_R$, 
of $R$ in Section~\ref{sec:associated}. 
Then a rack coloring by $R$ can be interpreted in terms of 
its associated quandle $Q_R$ as follows: 

\begin{theorem}\label{thm:main2}
For any $S^2$-knot diagram $D$ without branch points, 
there is a bijection between $\Col_R(D)$ and $\Col_{Q_R}(D)$. 
\end{theorem}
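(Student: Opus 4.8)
The plan is to build the associated quandle $Q_R$ explicitly from the \emph{kink map} $\kappa\colon R\to R$, $\kappa(a)=a*a$, and then to realize the asserted bijection as a $\kappa$-twisting of colorings whose consistency is controlled by the topology of $S^2$. First I would record the algebraic properties of $\kappa$ that make this work. Writing $S_b$ for the bijection $\bullet\mapsto\bullet*b$ of (Q2), axiom (Q3) gives $S_{a*b}=S_bS_aS_b^{-1}$, whence $S_{\kappa(a)}=S_a$; from this one checks that $\kappa$ is a bijection commuting with every $S_b$, that $\kappa(a*b)=\kappa(a)*b$, and that $a*\kappa(b)=a*b$. I would then define $Q_R$ to be the set $R$ equipped with $a\triangleright b:=\kappa^{-1}(a*b)$. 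The above relations give $a\triangleright a=\kappa^{-1}(\kappa(a))=a$, so (Q1) holds, while (Q2) and (Q3) follow from those for $*$ together with the fact that a $\kappa$-power in the second (multiplier) slot is invisible, i.e.\ $S_{\kappa(b)}=S_b$. In particular $Q_R$ is a quandle with the \emph{same} underlying set as $R$, so $\Col_R(D)$ and $\Col_{Q_R}(D)$ are at least potentially equinumerous.

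Next I would describe the bijection itself. For a finite rack let $N$ be the order of $\kappa$ (take $\Z$ in place of $\Z/N$ if $\kappa$ has infinite order). The point is that the discrepancy between the $*$-relation and the $\triangleright$-relation is a single $\kappa^{-1}$, which is purely algebraic and independent of the colors, so I would hope to correct it by a \emph{geometric level function} $n\colon\s(D)\to\Z/N$ attached to the diagram alone, subject to $n(x_i)-n(x_k)=1$ across each double point curve (with $x_i,x_k$ the under-sheets oriented by the over-sheet normal as in Figure~\ref{fig:coloring-1}). Indeed, for a rack coloring $c$ the computation $\kappa^{-1}\bigl(\kappa^{n(x_i)}c(x_i)*\kappa^{n(x_j)}c(x_j)\bigr)=\kappa^{n(x_i)-1}(c(x_k))=\kappa^{n(x_k)}(c(x_k))$ shows that $c\mapsto \kappa^{n}\circ c$ sends $\Col_R(D)$ into $\Col_{Q_R}(D)$, with evident inverse $c'\mapsto\kappa^{-n}\circ c'$. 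Thus the whole theorem collapses to the existence of such an $n$.

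The role of Sections~\ref{sec:immersed} and~\ref{sec:associated} is to make this rigorous through the intermediate notion of a rack coloring of a diagram carrying a system of \emph{immersed curves} on its sheets, across which the color is required to jump by $\kappa^{\pm1}$ while obeying $c(x_k)=c(x_i)*c(x_j)$ along each double point curve. The immersed curves are exactly the branch cuts where the level function $n$ jumps, so a $Q_R$-coloring corresponds tautologically to a rack coloring carrying a canonical such system. The structural input I would prove in Section~\ref{sec:immersed} is an \emph{invariance}: the number of colorings is unchanged under isotopy of the immersed curves and under insertion or deletion of a null-homotopic immersed curve. Since every closed curve on $S^2$ is null-homotopic, this lets me delete the canonical system entirely and recover an honest element of $\Col_R(D)$, which is the geometric content of constructing $n$.

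The hard part will be the existence (and choice-independence) of this level function, equivalently the removability of the canonical immersed-curve system. The prescribed jumps define a $1$-cochain on the cell decomposition of $S^2$ given by the broken-surface diagram; the \emph{absence of branch points} is precisely what makes this cochain \emph{closed} — a branch point is a point where a double point curve terminates and $a*a=a$ is forced, i.e.\ a source of the jump — and checking closedness at the triple points from (Q3) is where much of the bookkeeping lives. Given closedness, exactness follows from $H^1(S^2;\Z/N)=0$, pinning $n$ down to a global constant that I would fix by a basepoint sheet. This is exactly the step that uses $\Sigma=S^2$: for $\Sigma=T^2$ the same cochain can represent a nonzero class in $H^1(T^2;\Z/N)$, obstructing the lift, which is the mechanism behind the disparity of $\Col_R(D_1)$ and $\Col_R(D_2)$ in Example~\ref{ex:Satoh}.
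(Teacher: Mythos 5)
Your proposal is correct and follows essentially the same route as the paper: your $\kappa$ is the inverse of the paper's kink map $\iota$, your $a\triangleright b=\kappa^{-1}(a*b)$ coincides with the paper's $a*^{\iota}b=\iota(a)*b$, and your level function $n$ is exactly the Alexander-type numbering of Proposition~\ref{prop:immersed}, whose existence rests on the double point curves being closed (no branch points) and null-homologous on $S^2$, just as in the paper's two-step factorization through Theorems~\ref{thm:associated} and~\ref{thm:immersed}. The only slip is cosmetic: the closedness of your jump cochain at triple points is purely topological and does not use (Q3); the rack identities (K2)/(K3) enter instead when verifying that the twisted map still satisfies the coloring relations.
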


Since quandle colorings are invariants of surface-knots, 
Theorem~\ref{thm:main2} implies Theorem~\ref{thm:main1}. 
Hence, in what follows, 
we focus on proving Theorem~\ref{thm:main2}. 
Before that, 
we will show Theorem~\ref{thm:immersed} in Section~\ref{sec:immersed} 
and Theorem~\ref{thm:associated} in Section~\ref{sec:associated}. 
Combining these two theorems, 
we will prove Theorem~\ref{thm:main2} in 
the end of Section~\ref{sec:associated}.

\section{Rack colorings with immersed curves}\label{sec:immersed}

\subsection{The kink map of a rack}
For a rack $R=(R,*)$, let $\iota : R \to R$
be the map, called the (negative) \textit{kink map} \cite{TanakaT} of $R$, 
characterized by the equation $\iota (a)*a=a$ for any element $a \in R$. 
The map $\iota$ is well-defined by the condition (Q2). 
It might be better to denote it by a symbol like \lq\lq $\iota_R$\rq\rq , 
since this map is uniquely determined by $R$. 
However, we denote it by $\iota$ for simplicity. 
We note that (a notion similar to) the map $\iota$ 
has essentially appeared in \cite{Nel-pre}. 
See Remark~\ref{rem:kink} below for comments on 
a hidden diagrammatic meaning of the kink map $\iota$. 
It is known in \cite{TanakaT} that the kink map $\iota$ satisfies 
the following three conditions. 
\begin{enumerate}
\item[(K1)]
The map $\iota$ is bijective.
\item[(K2)] 
For any $a,b \in R$, $\iota(a)*b=\iota(a*b)$. 
\item[(K3)]
For any $a,b \in R$, $a * \iota(b)= a*b$. 
\end{enumerate}
By the condition (K1), there is a unique inverse map, 
denoted by $\iota^{-1}$ in a usual way, of the kink map $\iota$. 
Then, for any integer $n$, a symbol $\iota^n$ does make sense, 
that is, the symbol $\iota^n$ denote
the $n$-times composition of $\iota$ for $n \geq 0$ and 
the $|n|$-times composition of $\iota^{-1}$ for $n < 0$. 

\begin{remark}\label{rem:kink}
The kink map $\iota$ of a rack $R$ 
corresponds to a negative kink of a classical knot diagram 
in the following sense. 
The negative kink consists of the two arcs as in Figure~\ref{fig:kink}. 
If we assign an element $a \in R$ to the arc which comes in, 
then the arc which goes out receives the element $\iota(a) \in R$ 
by the rack coloring condition at the crossing, 
where rack colorings for classical knots 
are defined in a way similar to that for surface-knots and 
we omit the details for the definition. 
Thus we can think of the kink map $\iota$ as 
an algebraic abstraction of the negative kink.  

\begin{figure}[thbp]\centering
\begin{minipage}[]{0.3\hsize}
\includegraphics[width=\hsize]{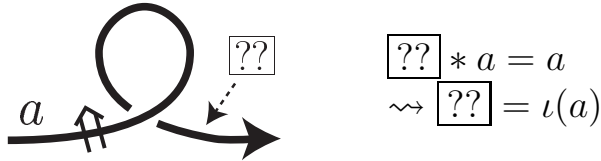}
\end{minipage}\qquad \qquad 
\begin{minipage}[]{0.4\hsize}
{\Large $\fbox{??} * a = a$ \\ 
\quad \quad $\rightsquigarrow$ $\fbox{??}=\iota(a)$}
\end{minipage}
\caption{The map kink $\iota$ and a negative kink}\label{fig:kink}
\end{figure}
\end{remark}

\subsection{Rack colorings with immersed curves}
Consider a set $L$ of oriented immersed curves on a surface-knot diagram $D$. 
We assume that $L$ intersects itself transversely 
and each multiple point is a double point.
We further assume that $L$ intersects the double point curves of $D$ 
transversely, and misses triple points and branch points of $D$. 
The orientation for each immersed curve of $L$ is represented 
by normal directions $\vec{n}$, 
depicted by an arrow 
which looks like the symbol \lq\lq $\uparrow$\rq\rq\  
as in Figure~\ref{fig:curve}, 
such that the pair $(\vec{v}, \vec{n})$ matches the orientation of $D$, 
where $\vec{v}$ denote the orientation of the immersed curve. 
%

\begin{figure}[thbp]\centering
\includegraphics[width=0.35\textwidth]{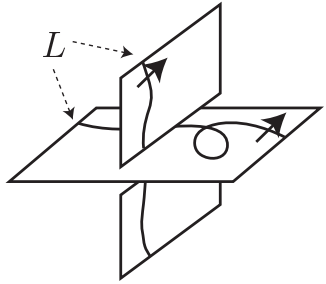}
\caption{Local picture of oriented immersed curves on a diagram}\label{fig:curve}
\end{figure}

By cutting each sheet in $\s(D)$ along the curves of $L$, 
we obtain a disjoint union of connected compact oriented 
surfaces, each of which is also called a \textit{sheet} of the pair $(D,L)$. 
We note that the local picture of a diagram in Figure~\ref{fig:curve} 
consists of seven sheets. 
We denote by $\s(D,L)$ the set of all sheet of $(D,L)$. 
%
We note that each sheet in $\s(D,L)$ is included in a unique sheet 
in $\s(D)$ as a subset. 
For a rack $R$, a map $c: \s(D,L) \to R$ is a 
\textit{rack coloring with immersed curves} 
if it satisfies the two relation: 
\begin{itemize}
\item
One is the same relation along each double point curve of $D$ as 
that in (usual) rack colorings, and 
\item
The other is the following relation along each oriented immersed curve of $L$. 
Let $x_i$, $x_j$ be two sheets along an oriented immersed curve 
such that the normal direction of the curve points from 
$x_i$ to $x_j$. Then it is required that $c(x_j)=\iota(c(x_i))$ 
as in the left of Figure~\ref{fig:coloring-2}. 
\end{itemize}
\begin{figure}[thbp]\centering
$\begin{minipage}[]{0.3\hsize}
\includegraphics[width=\hsize]{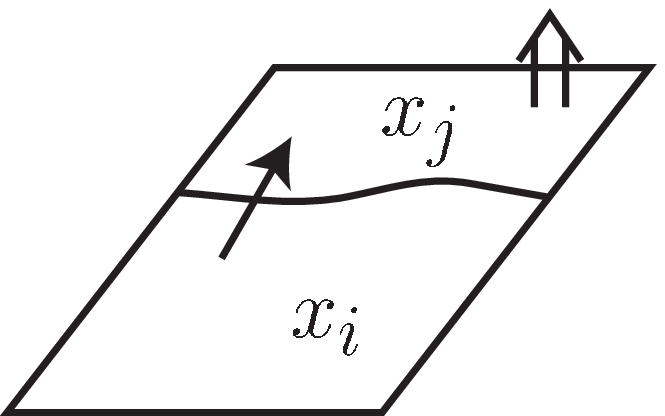}

\medskip{\Large $c(x_j)=\iota(c(x_i))$}
\end{minipage}\quad
\left( \quad 
\xlongleftrightarrow[\textrm{\large Remark~\ref{rem:immersed}}]{\phantom{xxxxxxxx}} 
\begin{minipage}[]{0.3\hsize}
\includegraphics[width=\hsize]{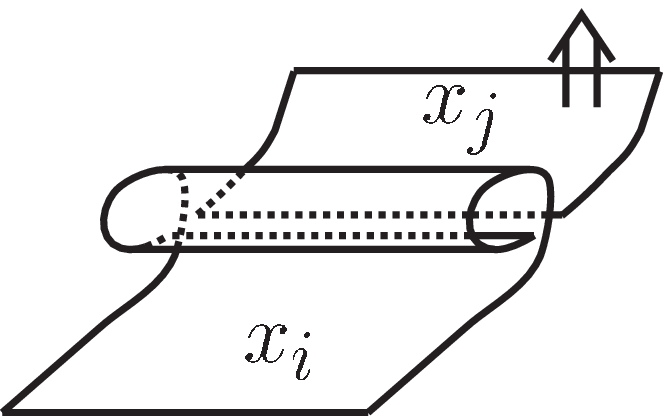}

\medskip{\Large $c(x_j)=\iota(c(x_i))$}
\end{minipage} \ \right) $
\caption{Coloring relation along an oriented immersed curve}
\label{fig:coloring-2}
\end{figure}
See Remark~\ref{rem:immersed} below for comments on the latter 
(somewhat artificial) coloring condition. 
Let $\Col_R(D,L)$ be the set of rack colorings of $(D,L)$ by $R$. 
We note that when a rack $R$ is finite, $\Col_R(D,L)$ is also finite. 
%
At each double point, say $p$, of $L$ and double point curves of $D$, 
the well-definedness of rack coloring with immersed curves is not so obvious. 
By using the properties (K2) and (K3) of the kink map $\iota$, 
we can show the following near the double point $p$. 
\begin{itemize}
\item
If an arc of $L$ lies on the under-sheets, 
the condition (K2) ensures the well-definedness at $p$. 
The situation is illustrated on 
the left of Figure~\ref{fig:well-defined}. 
\item
If an arc of $L$ lies on the over-sheet, 
the condition (K3) ensures the well-definedness at $p$. 
The situation is illustrated on 
the right of Figure~\ref{fig:well-defined}.  
\end{itemize}

\begin{figure}[thbp]\centering
\includegraphics[width=0.83\hsize]{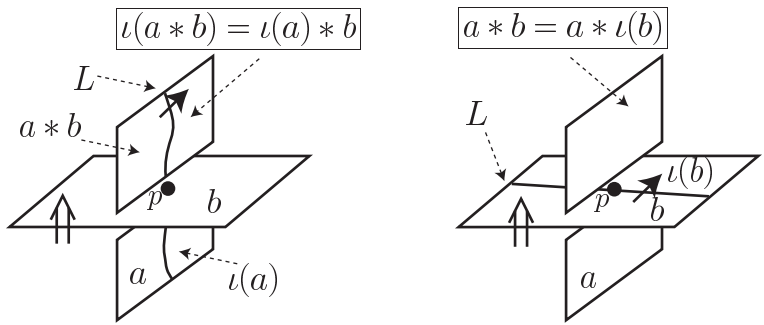}
\caption{Well-definedness of rack colorings with immersed curves}
\label{fig:well-defined}
\end{figure}

\begin{remark}\label{rem:immersed}
We mention here a hidden meaning of a set $L$ of 
oriented immersed curves on a surface-knot diagram $D$. 
A neighborhood of an arc of $L$ on $D$ is virtually 
considered as an abstraction of \lq\lq (kink)$\times [0,1]$\rq\rq\ 
as in the right of Figure~\ref{fig:coloring-2}. 
If we replace the neighborhood of the arc of $L$ virtually 
with \lq\lq (kink)$\times [0,1]$\rq\rq , 
then the (usual) rack coloring condition implies 
our coloring condition along each oriented immersed curve of $L$. 
\end{remark}

\begin{prop}\label{prop:immersed}
For two sets $L_1$ and $L_2$ of oriented immersed curves 
on a surface-knot diagram $D$, 
if $L_1$ is homologous to $L_2$, then 
there is a bijection between $\Col_R(D,L_1)$ and $\Col_R(D,L_2)$. 
\end{prop}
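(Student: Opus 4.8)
The plan is to realize the homology $[L_1]=[L_2]$ by a \emph{multiplicity function} on the surface and to transport colorings by powers of the kink map $\iota$. Lift the immersed curves to the surface-knot itself, a closed connected oriented surface which I denote $F$; each normal-oriented immersed curve is then a $1$-cycle on $F$, and the hypothesis says $L_1-L_2=\partial W$ for some $2$-chain $W$. Fix a cell decomposition of $F$ whose $1$-skeleton contains $L_1$, $L_2$ and the preimages of the double point curves, and record $W$ as an integer weight on each $2$-cell. This gives a function $m$, locally constant on the complement of $L_1\cup L_2$, which jumps by $\pm 1$ across $L_1$ and by $\mp 1$ across $L_2$ (signs read from the orientations), and which does \emph{not} jump across a double point curve preimage at a point where no curve of $L_i$ meets it.

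First I would pass to the common refinement, cutting $F$ simultaneously along the double point curve preimages, $L_1$ and $L_2$. Given $c_1\in\Col_R(D,L_1)$, I set on each region $r$ of the refinement the value $\iota^{m(r)}\bigl(c_1(r)\bigr)$, where $c_1(r)$ is the color of the unique $(D,L_1)$-sheet containing $r$. Across a curve of $L_1$ the color $c_1$ is multiplied by $\iota^{\pm 1}$ while $m$ changes by $\mp 1$, so the two effects cancel and the new value is constant across $L_1$; hence it descends to a well-defined map $c_2$ on $\s(D,L_2)$. Across a curve of $L_2$, by contrast, $c_1$ is unchanged while $m$ jumps, so $c_2$ is multiplied by $\iota^{\pm 1}$, which is exactly the coloring relation required along $L_2$.

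The main point is to check that $c_2$ still satisfies the rack relation along each double point curve, and this is where (K2) and (K3) enter. At a generic point of a double point curve the two under-sheets $x_i,x_k$ carry the same multiplicity, $m(x_i)=m(x_k)$, since the double point curve preimage is crossed there without crossing $L_1\cup L_2$. Writing $c_2=\iota^{m}(c_1)$ and using the consequence $a*\iota^{n}(b)=a*b$ of (K3) to delete the power of $\iota$ on the over-sheet color, followed by the consequence $\iota^{n}(a)*b=\iota^{n}(a*b)$ of (K2), the relation $c_1(x_k)=c_1(x_i)*c_1(x_j)$ is carried to $c_2(x_i)*c_2(x_j)=\iota^{m(x_i)}\bigl(c_1(x_k)\bigr)=\iota^{m(x_k)}\bigl(c_1(x_k)\bigr)=c_2(x_k)$, precisely because $m(x_i)=m(x_k)$. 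The interaction points where a curve of $L$ meets a double point curve are already handled by the well-definedness of colorings with immersed curves established above through (K2) and (K3).

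Finally, the assignment $c_1\mapsto c_2$ is a bijection: running the same construction with $-W$, which satisfies $\partial(-W)=L_2-L_1$, produces its inverse, using that $\iota$ is invertible by (K1). Changing $W$ by a multiple $k[F]$ of the fundamental class merely post-composes with the global twist $\iota^{k}$, itself a bijection of $\Col_R(D,L_2)$, so the output is insensitive to the choice of $W$. I expect the genuine obstacle to be bookkeeping rather than conceptual: fixing compatible sign conventions between the jumps of $m$ and the normal orientations of $L_1,L_2$, and verifying the double point curve computation uniformly at the finitely many points where $L$ crosses the double point curves.
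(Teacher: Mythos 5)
Your proposal is correct and follows essentially the same route as the paper: your multiplicity function $m$ coming from a $2$-chain $W$ with $\partial W = L_1 - L_2$ is exactly the paper's Alexander-numbering-like assignment on the sheets of $(D,(-L_1)\cup L_2)$ (constant across double point curves, jumping across arcs of the curves), and the transport $c\mapsto \iota^{m}(c)$ with inverse given by negating the numbering is the paper's bijection $\Phi$. Your explicit (K2)/(K3) verification along the double point curves is a point the paper leaves to the reader, and your remark about shifting $W$ by $k[F]$ corresponds to the paper's choice of base sheet $x_0$.
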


\begin{proof}[Proof of Proposition~\ref{prop:immersed}]
We may assume that $L_1$ and $L_2$, by perturbing $L_1$ or $L_2$ if necessary, 
intersect transversely and each multiple point on the union $L_1 \cup L_2$ 
is a double pont.
We denote by $-L_1$ the set $L_1$ with the opposite orientation. 
Since $L_1$ and $L_2$ are homologous, 
the union $(-L_1) \cup L_2$ is null-homologous on $D$.

We choose a sheet, say $x_0$, of $(D, (-L_1) \cup L_2)$, and 
assign an integer to each sheet of $(D, (-L_1) \cup L_2)$ 
by the following Alexander numbering-like rule: 
\begin{itemize}
\item
We assign $0$ to the sheet $x_0$. 
\item
For two adjacent sheets $x_1$ and $x_2$ along an arc of $(-L_1) \cup L_2$, 
we suppose that the normal orientation of the arc points from $x_1$. 
Then the integer assigned to $x_2$ is larger than that to $x_1$ by $1$. 
\item
For two adjacent sheets along a double point curve of $D$, 
the integers assigned to them are the same.  
\end{itemize} 
See Figure~\ref{fig:numbering}. 
Since  $(-L_1) \cup L_2$ is null-homologous on $D$, 
this assignment is well-defined.

\begin{figure}[thbp]\centering
\includegraphics[width=0.8\textwidth]{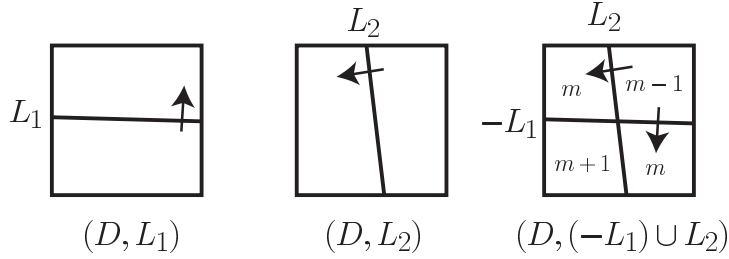}\\
\caption{Numbering on $\s(D, (-L_1) \cup L_2)$}\label{fig:numbering}
\end{figure}

Using the above numbering on $\s(D, (-L_1) \cup L_2)$, 
we construct an explicit bijection $\Phi: \Col_R(D,L_1) \to \Col_R(D,L_2)$.
For a coloring $c \in \Col_R(D,L_1)$, we define $\Phi(c) \in \Col_R(D,L_2)$ as follows: 
Let $x_2$ be a sheet in $\s(D, L_2)$. 
The sheet $x_2$ might be divided into the further smaller 
sheets on $(D, (-L_1) \cup L_2)$. 
(In the case where $L_1 \cap x_2= \emptyset$, the sheet $x_2$ is not divided.) 
For the sheet $x_2$, 
choose one of the small sheets on $(D, (-L_1) \cup L_2)$, and denote it by $y$. 
We note that $y$ is included in $x_2$ as a subset on $(D, L_2)$. 
Let $x_1$ be a unique sheet in $\s(D, -L_1)$ 
such that $x_1$ includes $y$ as a subset on $(D, -L_1)$. 
See Figure~\ref{fig:bijection-phi}. 
Then we define $\Phi(c)(x_2)=\iota^n (c(x_1))$, 
where $n$ is the numbering on the small sheet $y$.

The well-definedness of the map $\Phi$ is shown as follows: 
Two adjacent small sheets $y$ and $y'$, included in $x_2$ as subsets on 
$(D, (-L_1) \cup L_2)$, are separated by an arc, say $\alpha$, of $-L_1$. 
Suppose that  the normal orientation of $\alpha$ points from $y$. 
By the above rule, it holds that $n'=n+1$, where 
the integer $n$ (resp. $n'$) is the numbering on the small sheet $y$ (resp. $y'$). 
Let $x_1$ and $x_1'$ be the sheets in $\s(D, -L_1)$ such that 
$x_1$ (resp. $x_1'$) includes $y$ (resp. $y'$) as a subset on $(D, -L_1)$. 
See Figure~\ref{fig:bijection-phi}. 
By the rack coloring condition for immersed curves, the equation 
$c(x_1) = \iota (c(x_1'))$ holds. 
Therefore, we have 
$$
\iota^{n} \bigl( c(x_1) \bigr) = \iota^{n'-1} \bigl( c(x_1) \bigr) 
= \iota^{n'-1} \Bigl( \iota \bigl( c(x_1') \bigr) \Bigr) 
= \iota^{n'} \bigl( c(x_1') \bigr) .
$$
We can also check that 
the rack coloring conditions along $L_2$ and the double point curves of $D$
on $\s(D, L_2)$ are satisfied. 
The inverse map of $\Phi$ can be constructed in a way similar to the 
definition of $\Phi$. 
(The only difference is to replace $n$ in the definition of $\Phi$ with $-n$.)

\begin{figure}[thbp]\centering
\includegraphics[width=0.80\textwidth]{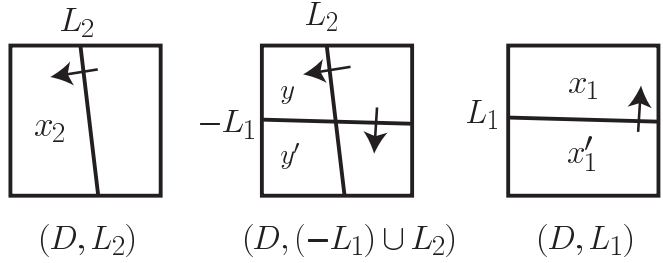}
\caption{Local picture of the bijection $\Phi$}\label{fig:bijection-phi}
\end{figure}
\end{proof}

Since any set of oriented immersed curves on an $S^2$-knot diagram is 
null-homologous, Proposition~\ref{prop:immersed} implies:

\begin{theorem}\label{thm:immersed}
For any set $L$ of oriented immersed curves on an $S^2$-knot diagram $D$, 
there is a bijection between $\Col_R(D,L)$ and $\Col_R(D)$. 
\end{theorem}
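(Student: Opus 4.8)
The plan is to deduce this statement directly from Proposition~\ref{prop:immersed} by comparing the given set $L$ with the empty set of immersed curves. First I would observe that when the set of immersed curves is empty, the notion of a rack coloring with immersed curves degenerates to that of an ordinary rack coloring: the sheets of $(D,\emptyset)$ are exactly the sheets of $D$, and the only surviving requirement is the usual relation along the double point curves of $D$. Hence $\Col_R(D,\emptyset)=\Col_R(D)$, and it suffices to produce a bijection between $\Col_R(D,L)$ and $\Col_R(D,\emptyset)$.

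Next I would verify the homological hypothesis needed to apply Proposition~\ref{prop:immersed} with $L_1=L$ and $L_2=\emptyset$. The set $L$, each of whose arcs lies on a definite sheet and which misses triple and branch points while meeting the double point curves transversely, lifts to the surface-knot along the projection and thereby represents a $1$-cycle on the underlying surface. Since the surface-knot is an $S^2$-knot, this surface is $S^2$, and $H_1(S^2;\Z)=0$; therefore every such $1$-cycle is null-homologous, that is, $L$ is homologous to $\emptyset$. This is precisely the assertion ``any set of oriented immersed curves on an $S^2$-knot diagram is null-homologous'' that precedes the statement.

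With these two points in hand, Proposition~\ref{prop:immersed} applied to the homologous pair $L_1=L$ and $L_2=\emptyset$ yields a bijection $\Col_R(D,L)\to\Col_R(D,\emptyset)=\Col_R(D)$, which is the desired conclusion.

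I would expect no serious obstacle here, since the substantive work --- constructing the explicit bijection $\Phi$ via the Alexander-type numbering and checking its well-definedness through the kink-map identities (K2) and (K3) --- is already carried out in the proof of Proposition~\ref{prop:immersed}. The one point requiring care is the homological bookkeeping: making precise that $L$ determines an integral $1$-cycle on the $S^2$ surface-knot, so that $\Z$-coefficient homology (matching the integer numbering used in the proof of Proposition~\ref{prop:immersed}) is the relevant theory, and confirming that the empty set is a legitimate instance of a set of oriented immersed curves to serve as the null-homologous reference.
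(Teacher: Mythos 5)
Your proposal matches the paper's own argument: the paper derives Theorem~\ref{thm:immersed} immediately from Proposition~\ref{prop:immersed} by noting that any set of oriented immersed curves on an $S^2$-knot diagram is null-homologous, hence homologous to the empty set, for which rack colorings with immersed curves reduce to ordinary rack colorings. Your additional care about identifying $\Col_R(D,\emptyset)$ with $\Col_R(D)$ and about the $1$-cycle on $S^2$ only makes explicit what the paper leaves implicit.
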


\section{Associated quandles}\label{sec:associated}

For a rack $R=(R,*)$ and the kink map $\iota$ of $R$, 
we define a new binary operation $*^{\iota}$ on the set $R$ 
by $a*^{\iota}b:=\iota(a)*b$. 
Then it is known in \cite{AG-03,TanakaT} that the pair $(R,*^{\iota})$ 
becomes a quandle. 
The quandle $(R, *^{\iota})$ is called the \textit{associated quandle} of $R$ 
and is denoted by $Q_R$. 
If a surface-knot diagram has no branch points, 
then quandle colorings by $Q_R$ can be interpreted 
in terms of rack colorings with immersed curves by $R$ as follows: 

\begin{theorem}\label{thm:associated}
For any surface-knot diagram $D$ without branch points, 
there exists a set $L$ of oriented immersed curves on $D$ such that 
there is a bijection between $\Col_{Q_R}(D)$ and $\Col_R(D,L)$. 
\end{theorem}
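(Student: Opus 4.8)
The plan is to construct the required set $L$ explicitly out of the double point curves of $D$ and to let the kink map $\iota$ absorb the difference between the rack operation $*$ and the associated quandle operation $*^\iota$. Along a double point curve the $Q_R$-coloring condition reads $\tilde{c}(x_k)=\tilde{c}(x_i)*^\iota \tilde{c}(x_j)=\iota\bigl(\tilde{c}(x_i)\bigr)*\tilde{c}(x_j)$, whereas the ordinary rack relation lacks the inner $\iota$. Thus the sole discrepancy between a $Q_R$-coloring and an $R$-coloring of $D$ is a single application of $\iota$ to the \emph{incoming} under-sheet at each double point curve. Since crossing a curve of $L$ in its normal direction is precisely designed to apply $\iota$ (the left of Figure~\ref{fig:coloring-2}), I would take $L$ to be a parallel push-off copy of each double point curve, pressed slightly onto its incoming under-sheet (the side from which the normal of the over-sheet points) and oriented so that its normal points toward the double point curve. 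Near a triple point the three push-offs are routed so as to miss the triple point, which forces two of them (those on the bottom sheet) to cross each other transversely and forces the push-off on the middle sheet to cross a double point curve of $D$ transversely; both kinds of crossing are permitted for a set of immersed curves.

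With this $L$ fixed, each sheet of $D$ has one distinguished \emph{main part} in $\s(D,L)$, together with thin collar strips squeezed between the double point curves and their push-offs. I would define the map $\Col_R(D,L)\to\Col_{Q_R}(D)$ by restricting a coloring to these main parts, and its candidate inverse $\Psi$ by sending $\tilde{c}\in\Col_{Q_R}(D)$ to the coloring of $\s(D,L)$ whose value on a small sheet $y$ contained in a sheet $s$ of $D$ is $\iota^{m(y)}\bigl(\tilde{c}(s)\bigr)$, where $m(y)$ counts, with orientation, how many push-offs separate $y$ from the main part of $s$. The decisive local computation is the translation at a double point curve: if the incoming under-sheet has main color $a$, then the collar strip just before the curve is colored $\iota(a)$, and the rack relation at the curve gives the outgoing under-sheet the color $\iota(a)*b=\iota\bigl(\tilde c(x_i)\bigr)*\tilde c(x_j)$, which is exactly $\tilde c(x_k)$. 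Hence the restriction map does land in $\Col_{Q_R}(D)$, and $\Psi$ meets the double-point-curve relation, so that the two maps are mutually inverse by construction.

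The main obstacle is to confirm that $\Psi(\tilde c)$ is a genuine rack coloring with immersed curves \emph{globally}, i.e.\ that $m(y)$ is well-defined and that every local relation holds at and around triple points. For the relations where a curve of $L$ meets a double point curve of $D$, and where two curves of $L$ meet, I can simply invoke the well-definedness already established in Section~\ref{sec:immersed} through the kink-map identities (K2) and (K3), together with the fact that two successive push-off crossings contribute $\iota^2$ independently of their order, so $m$ is unambiguous. The genuinely new point is compatibility at a triple point: one must check that the collar colors produced by $\Psi$ close up consistently there, and this is exactly where the quandle axiom (Q3) for $\tilde c$ (equivalently, invariance of $Q_R$-colorings under the triple-point Roseman move) is used. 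I expect the bulk of the work to be a careful bookkeeping of the many small regions around a triple point, verifying that the $\iota$-powers recorded by $m$ reconcile the three $*$-relations on the double point curves with the three $\iota$-relations on the push-offs; once this compatibility is in hand, $\Psi$ and the restriction map provide the desired bijection.
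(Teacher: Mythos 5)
Your proposal is correct and follows essentially the same route as the paper: the paper also takes $L$ to be the push-off of the double point curves onto the incoming under-sheet with normal pointing toward the double point curve, defines $\Psi$ by applying $\iota$ exactly on the collar strips between $L$ and the double point curves, and obtains the inverse by restricting to the main parts, with the identity $a*^{\iota}b=\iota(a)*b$ doing the decisive local computation. The triple-point and self-crossing compatibilities you flag are, as you suspected, already covered by the (K2)/(K3) well-definedness established in Section~\ref{sec:immersed}.
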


\begin{proof}[Proof of Theorem~\ref{thm:associated}]
We take a set $L$ of oriented immersed curves as 
a copy of the double point curves of $D$ 
by pushing the double point curves slightly on the under-sheet 
in the direction opposite to the normal direction of the over-sheet 
along each double point curve as in the right of Figure~\ref{fig:bijection-psi}, 
where the normal orientation of $L$ matches that of the over-sheet. 
The set $L$ is well-defined, since the surface-knot diagram $D$, 
which we now consider, has no branch points. 
We note that when $D$ has no triple points (and no branch points), 
the set $L$ consists of oriented embedded curves. 

We construct an explicit bijection $\Psi: \Col_{Q_R}(D) \to \Col_R(D,L)$. 
See Figure~\ref{fig:bijection-psi}, 
where we depict a local picture of $\Psi$ along an arc of the double point curves. 
For a coloring $c \in \Col_{Q_R}(D)$, we define $\Psi(c) \in \Col_R(D,L)$ as follows: 
Let $y$ be a sheet in $\s(D,L)$. 
It follows from the construction of $L$ that 
there exists a unique sheet, say $x$, in $\s(D)$ such that 
$x$ includes $y$ as a subset on $D$. 
Then we define $\Psi(c)(y) := \iota(c(x))$ if the sheet $y$ 
lies between an arc of $L$ and the double point curve 
that is parallel to the arc, and $\Psi(c)(y) := c(x)$ if not. 
This map $\Psi$ is well-defined, since the definition of 
the binary operation of $Q_R$ requires 
the equality $a*^{\iota}b = \iota(a)*b$.

\begin{figure}[thbp]\centering
$$\begin{array}{ccc}
\mathrm{Col}_{Q_R}(D) & 
\xlongrightarrow[1:1]{\phantom{xxxx}{\text{\Large $\Psi$}}\phantom{xxxx}} & 
\mathrm{Col}_{R}(D,L) \\
\begin{minipage}[]{0.35\hsize}
\includegraphics[width=\hsize]{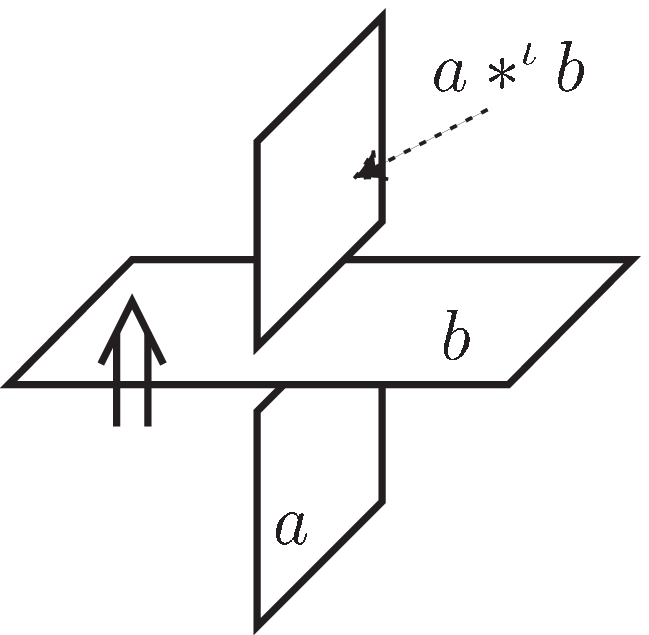}
\end{minipage} 
& 
& 
\begin{minipage}[]{0.35\hsize}
\includegraphics[width=\hsize]{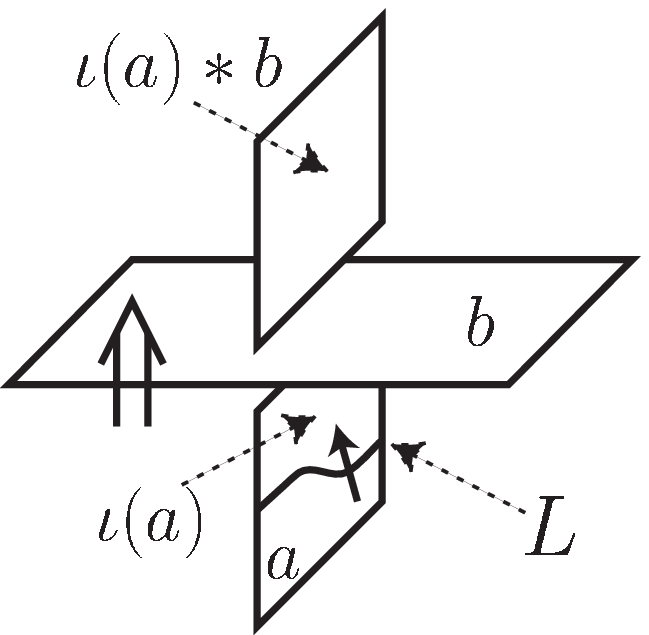}
\end{minipage}
\end{array}$$
\caption{Local picture of the bijection $\Psi$}\label{fig:bijection-psi}
\end{figure}

To prove the bijectiveness of $\Psi$, 
we construct a map $\Psi': \Col_R(D,L) \to \Col_{Q_R}(D)$ and 
show that $\Psi'$ is the inverse map of $\Psi$.  
For a coloring $c' \in \Col_R(D,L)$, 
we define $\Psi'(c') \in \Col_{Q_R}(D)$ as follows: 
Let $x$ be a sheet in $\s(D)$. 
It follows from the construction of $L$ that 
there exists a unique sheet, say $y$, in $\s(D,L)$ such that 
$y$ is included in $x$ as a subset on $D$ and 
does not lie between an arc of $L$ and the double point curve 
that is parallel to the arc.  
Then we define $\Psi'(c')(x) := c'(y)$. 
Again the definition of the binary operation of $Q_R$ 
ensures the well-definedness of $\Psi'$ and 
we can check that $\Psi'$ is the inverse map of $\Psi$ 
by an explicit calculation. 
\end{proof}

%

With Theorem~\ref{thm:immersed} and Theorem~\ref{thm:associated} in hand, 
we now prove Theorem~\ref{thm:main2}.

\begin{proof}[Proof of Theorem~\ref{thm:main2}]
For an $S^2$-knot diagram $D$ without branch points, 
by using Theorem~\ref{thm:associated}, 
there exists a set $L$ of oriented immersed curves on $D$ such that 
there is a bijection between $\Col_{Q_R}(D)$ and $\Col_{R}(D,L)$. 
Then, by using Theorem~\ref{thm:immersed}, 
there is a bijection between $\Col_{R}(D,L)$ and $\Col_{R}(D)$. 
By the composition of these two bijections, we have a desired bijection. 
\end{proof}

\section{Regular-equivalences and rack colorings}\label{sec:reg-eq}

Finally we discuss a relationship with  
regular-equivalences of surface-knot diagrams. 
As we mentioned in Subsection~\ref{subsec:branch}, 
any (oriented) surface-knot has a diagram without branch points. 
In \cite{Sat-01}, 
such a \lq\lq branch-free\rq\rq\ diagram is called a \textit{regular} diagram, 
and two regular diagrams are said to be 
\textit{regular-equivalent} if they are related by a finite 
sequence of \lq\lq branch-free\rq\rq\ Roseman moves, 
that is, the moves of type $D1$, $D2$, $T1$ and $T2$ 
in Figure~\ref{fig:roseman}. 
(For surface-knot diagrams, which may have branch points, 
we can also define the notion of \lq\lq regular-equivalence\rq\rq\ 
in a similar way. 
That is, two surface-knot diagrams are said to be 
\textit{regular-equivalent} if they are related by a finite 
sequence of \lq\lq branch-free\rq\rq\ Roseman moves.)
We can easily check that 
there is a one-to-one correspondence between rack colorings 
before and after each \lq\lq branch-free\rq\rq\ Roseman move. 
(We note that there is also a one-to-one correspondence 
before and after the Roseman move of type $BT$.)
Then we have the following: 

\begin{theorem}\label{thm:reg-eq}
For two diagrams $D_1$ and $D_2$ of a surface-knot, 
if they are regular-equivalent, then 
there is a bijection between $\Col_R(D_1)$ and $\Col_R(D_2)$. 
\end{theorem}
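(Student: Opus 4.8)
The plan is to reduce to a single Roseman move and then argue one move at a time. By the definition of regular-equivalence, $D_1$ and $D_2$ are joined by a finite sequence of branch-free Roseman moves, each of type $D1$, $D2$, $T1$ or $T2$ (together with ambient isotopies of $\R^3$, which patently carry colorings to colorings). Since bijections compose, it suffices to exhibit, for each single such move carrying a diagram $D$ to a diagram $D'$, a bijection between $\Col_R(D)$ and $\Col_R(D')$.

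First I would invoke locality. Each Roseman move alters the diagram only inside a small ball $B\subset\R^3$ and agrees with $D$ outside $B$. Hence a coloring is determined by its values on the sheets meeting $B$ together with fixed boundary data on $\partial B$, and the problem reduces to a purely local count: the colorings of the sheets of $D$ inside $B$ should correspond bijectively to those of $D'$ inside $B$ for each choice of boundary colors. Crucially, since the moves $D1,D2,T1,T2$ create no branch points, every coloring condition occurring inside $B$ is of the standard double-point-curve form $c(x_k)=c(x_i)*c(x_j)$, and no condition of the degenerate form $a*a=a$ is ever imposed.

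The heart of the matter is then a short case check matching each move to a rack axiom, exactly paralleling the classical correspondence recorded in Section~\ref{sec:def}: type II $\leftrightarrow$ (Q2), type III $\leftrightarrow$ (Q3). For the double-point moves $D1$ and $D2$, the new double point curves created inside $B$ force the color of each newly appearing sheet to be obtained from an old one by applying a map $*a$; axiom (Q2), the bijectivity of $*a$, shows this color is uniquely determined and that the correspondence of colorings is bijective. For the triple-point moves $T1$ and $T2$, the two sides of the move present the bottom sheet colored, after passing the three sheets in the two admissible orders, as $(a*b)*c$ and $(a*c)*(b*c)$; axiom (Q3) equates these, so a coloring passes through the move in exactly one way. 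The conceptual point --- and the reason racks already suffice here, whereas they fail once branch points are allowed --- is that axiom (Q1), which corresponds to the branch (type I) moves, is never needed.

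The only real work, then, is bookkeeping rather than mathematics: one must fix the conventions for the normal directions $\vec{n}$ on the sheets and check, in each local picture of Figure~\ref{fig:roseman}, that the over/under incidences and normal orientations feed the colors into (Q2) or (Q3) in the correct order. With these conventions pinned down, each of the four moves is handled by a direct substitution, and concatenating the resulting local bijections along the sequence relating $D_1$ to $D_2$ produces the asserted bijection between $\Col_R(D_1)$ and $\Col_R(D_2)$.
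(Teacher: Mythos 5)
Your proposal is correct and follows essentially the same route as the paper, which simply asserts that one can check a one-to-one correspondence of rack colorings before and after each branch-free Roseman move and then concatenates these bijections along the sequence. Your write-up just makes explicit the local case check (moves $D1$, $D2$ via (Q2), moves $T1$, $T2$ via (Q3), with (Q1) never needed) that the paper leaves to the reader.
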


This theorem says that 
rack colorings are invariants of regular-equivalence classes of 
diagrams of a surface-knot. 
Using a contraposition of Theorem~\ref{thm:reg-eq}, 
we can reprove that Satoh's examples are not regular-equivalent,  
that is, Example~\ref{ex:Satoh} implies the following: 

\begin{cor}\textrm{$($\cite[Theorem 3]{Sat-01}$)$}
There exist two regular surface-knot diagrams 
which are equivalent but not regular-equivalent. 
%
\end{cor}

Theorem~\ref{thm:reg-eq} cannot tell us 
anything about regular-equivalence classes of diagrams of an $S^2$-knot. 
Precisely speaking, 
for two diagrams $D_1$ and $D_2$ of an $S^2$-knot, 
it follows from Theorem~\ref{thm:main1} 
that even if they are not regular-equivalent, 
there is a bijection between 
the two sets, 
$\Col_R(D_1)$ and $\Col_R(D_2)$, 
of rack colorings by a rack $R$. 
However, Takase and the second author \cite{TakaseT} proved that, 
for any regular $S^2$-knot diagram $D$,  
there exists a regular diagram $D'$ such that 
$D$ and $D'$ represent the same $S^2$-knot but they are not regular-equivalent. 
They used immersion theory to prove 
such a theorem 
instead of rack theory.


\section*{Acknowledgments}
The authors thank Sam Nelson for several helpful comments.
The first author is partially supported by
Grant-in-Aid for Young Scientists (B) (No.~25800052), 
Japan Society for the Promotion of Science. 
The second author is partially supported by 
Grant-in-Aid for Scientific Research (C) (No.~26400082), 
Japan Society for the Promotion of Science. 

%
%




\end{document}